\begin{document}

\parskip0pt
\parindent10pt
 
\newenvironment{answer}{\color{Blue}}{\color{Black}}
\newenvironment{exercise}
{\color{Blue}\begin{exr}}{\end{exr}\color{Black}}

\theoremstyle{plain} 
\newtheorem{theorem}{Theorem}
\newtheorem*{theorem*}{Theorem}
\newtheorem{prop}{Proposition}
\newtheorem{porism}[theorem]{Porism}
\newtheorem{lemma}{Lemma}
\newtheorem{cor}[theorem]{Corollary}
\newtheorem{conj}[theorem]{Conjecture}
\newtheorem{funfact}[theorem]{Fun Fact}
\newtheorem*{claim}{Claim}
\newtheorem{question}{Question}
\newtheorem*{conv}{Convention}
 
\theoremstyle{remark}
\newtheorem{exr}{Exercise}
\newtheorem*{rmk}{Remark}

\theoremstyle{definition}
\newtheorem{defn}{Definition}
\newtheorem{example}{Example}

\renewcommand{\mod}[1]{{\ifmmode\text{\rm\ (mod~$#1$)}\else\discretionary{}{}{\hbox{ }}\rm(mod~$#1$)\fi}}

\newcommand{\ns}{\mathrel{\unlhd}}
\newcommand{\tr}{\text{tr}}
\newcommand{\wt}[1]{\widetilde{#1}}
\newcommand{\wh}[1]{\widehat{#1}}
\newcommand{\cbrt}[1]{\sqrt[3]{#1}}
\newcommand{\floor}[1]{\left\lfloor#1\right\rfloor}
\newcommand{\abs}[1]{\left|#1\right|}
\newcommand{\ds}{\displaystyle}
\newcommand{\nn}{\nonumber}
\newcommand{\im}{\text{Im}}
\newcommand{\re}{\text{Re}}
\renewcommand{\ker}{\textup{ker }}
\renewcommand{\char}{\textup{char }}
\renewcommand{\Im}{\textup{Im }}
\renewcommand{\Re}{\textup{Re }}
\newcommand{\area}{\textup{area }}
\newcommand{\isom}
    {\ds \mathop{\longrightarrow}^{\sim}}
\renewcommand{\ni}{\noindent}
\renewcommand{\bar}{\overline}
\newcommand{\morph}[1]
    {\ds \mathop{\longrightarrow}^{#1}}

\newcommand{\Gal}{\textup{Gal}}
\newcommand{\Aut}{\textup{Aut}}
\newcommand{\Crypt}{\textup{Crypt}}
\newcommand{\disc}{\textup{disc}}
\newcommand{\sgn}{\textup{sgn}}
\newcommand{\del}{\partial}

\newcommand{\mattwo}[4]{
\begin{pmatrix} #1 & #2 \\ #3 & #4 \end{pmatrix}
}

\newcommand{\vtwo}[2]{
\begin{pmatrix} #1 \\ #2 \end{pmatrix}
}
\newcommand{\vthree}[3]{
\begin{pmatrix} #1 \\ #2 \\ #3 \end{pmatrix}
}
\newcommand{\vcol}[3]{
\begin{pmatrix} #1 \\ #2 \\ \vdots \\ #3 \end{pmatrix}
}

\newsymbol\dnd 232D

\newcommand*\wb[3]{%
  {\fontsize{#1}{#2}\usefont{U}{webo}{xl}{n}#3}}

\newcommand\myasterismi{%
  \par\bigskip\noindent\hfill
  \wb{10}{12}{I}\hfill\null\par\bigskip
}
\newcommand\myasterismii{%
  \par\bigskip\noindent\hfill
  \wb{15}{18}{UV}\hfill\null\par\medskip
}
\newcommand\myasterismiii{%
  \par\bigskip\noindent\hfill
  \wb{15}{18}{z}\hfill\null\par\bigskip
}

\newcommand{\one}{{\rm 1\hspace*{-0.4ex} \rule{0.1ex}{1.52ex}\hspace*{0.2ex}}}

\renewcommand{\v}{\vec{v}}
\newcommand{\w}{\vec{w}}
\newcommand{\e}{\vec{e}}
\newcommand{\m}{\vec{m}}
\renewcommand{\u}{\vec{u}}
\newcommand{\vecx}{\vec{e}_1}
\newcommand{\vecy}{\vec{e}_2}
\newcommand{\vo}{\vec{v}_1}
\newcommand{\vt}{\vec{v}_2}

\renewcommand{\o}{\omega}
\renewcommand{\a}{\alpha}
\renewcommand{\b}{\beta}
\newcommand{\g}{\gamma}
\renewcommand{\d}{\delta}
\renewcommand{\t}{\theta}
\renewcommand{\k}{\kappa}
\newcommand{\ve}{\varepsilon}
\newcommand{\op}{\text{op}}

\newcommand{\Z}{\mathbb Z}
\newcommand{\ZN}{\Z_N}
\newcommand{\Q}{\mathbb Q}
\newcommand{\N}{\mathbb N}
\newcommand{\R}{\mathbb R}
\newcommand{\C}{\mathbb C}
\newcommand{\F}{\mathbb F}
\renewcommand{\H}{\mathbb H}
\newcommand{\B}{\mathcal B}
\newcommand{\p}{\mathcal P}
\renewcommand{\P}{\mathbb P}
\renewcommand{\r}{\mathcal R}
\renewcommand{\c}{\mathcal C}
\newcommand{\h}{\mathcal H}
\newcommand{\f}{\mathcal F}
\newcommand{\s}{\mathcal S}
\renewcommand{\L}{\mathcal L}
\newcommand{\lam}{\lambda}
\newcommand{\E}{\mathcal E}
\newcommand{\Ex}{\mathbb E}
\newcommand{\D}{\mathbb D}
\newcommand{\oh}{\mathcal O}
\newcommand{\I}{\mathcal I}
\newcommand{\n}{\mathcal N}
\newcommand{\J}{\mathcal J}

\newcommand{\diam}{\text{ diam}}
\newcommand{\vol}{\text{vol}}
\newcommand{\Int}{\text{Int}}

\newcommand{\Span}{\text{span}}

\newcommand{\supp}{\text{ supp }}

\newcommand{\0}{{\vec 0}}

\newcommand{\ignore}[1]{}

\newcommand{\poly}[1]{\textup{Poly}_{#1}}

\newcommand*\circled[1]{\tikz[baseline=(char.base)]{
            \node[shape=circle,draw,inner sep=2pt] (char) {#1};}}

\newcommand*\squared[1]{\tikz[baseline=(char.base)]{
            \node[shape=rectangle,draw,inner sep=2pt] (char) {#1};}}

\title{Upper Bounds for Fractional Joint Moments of the Riemann Zeta Function} 


\author{Michael J. Curran}
\address{Mathematical Institute, University of Oxford, Oxford, OX2 6GG, United Kingdom.}
\email{Michael.Curran@maths.ox.ac.uk}

\maketitle

\begin{abstract}
We establish upper bounds for the joint moments of the $2k^\text{th}$ power of the Riemann zeta function with the $2h^\text{th}$ power of its derivative for $0 \leq h \leq 1$ and $1\leq  k \leq 2$.
These bounds are expected to be sharp based upon predictions from random matrix theory.
\end{abstract}

\section{Introduction}

In the past two decades, conjectural connections between the zeros of the Riemann zeta function $\zeta(s)$ and eigenvalues of random unitary matrices have led to many interesting developments in understanding the moments of the zeta function. 
In the recent random matrix theory literature, there has been a fair bit of interest in understanding the joint moments of the characteristic polynomial of a random unitary matrix with its derivative.
In this paper, the primary objects are the joint moments of $\zeta(s)$, given by
\[
\I_T (k,h) = \int_{T}^{2T} |\zeta(\tfrac{1}{2} + i t)|^{2k - 2 h} \left|\zeta'(\tfrac{1}{2} + i t)\right|^{2h} dt,
\]
as well as the joint moments of the Hardy $Z$ function
\[
\J_T (k,h) = \int_{T}^{2T} |Z(t)|^{2k - 2 h} \left|Z'( t)\right|^{2h} dt,
\]
where
\[
Z(t) = \frac{\pi^{-i t / 2} \Gamma\left(\tfrac{1}{4} + \tfrac{it}{2}\right)}{\left|\Gamma\left(\tfrac{1}{4} + \tfrac{it}{2}\right)\right|} \zeta(\tfrac{1}{2}+ i t).
\]
Note in particular that $|Z(t)| = |\zeta(\tfrac{1}{2} + i t)|,$ and that $Z(t)$ is real valued for $t\in \R$.
The work of Keating and Snaith \cite{KS, KSLFns}, Hughes \cite{HughesThesis}, and Hall \cite{Hall} has led to the conjecture that whenever $k > -\tfrac{1}{2}$ and $-\frac{1}{2} < h \leq k + \tfrac{1}{2}$
\begin{equation} \label{eqn:JMConj}
\I_T (k,h) \sim   \mathfrak{C}_\zeta(k,h)  T (\log T)^{k^2 + 2 h}, \quad \J_T(h,k) \sim   \mathfrak{C}_Z(k,h)  T (\log T)^{k^2 + 2 h}
\end{equation}
for a certain constants $\mathfrak{C}_\zeta(k,h), \mathfrak{C}_Z (k,h)$ as $T \rightarrow \infty$.
There are conjectured values for the constants $\mathfrak{C}_Z(k,h)$ for general real $h,k$, but values for $\mathfrak{C}_\zeta(k,h)$ are only conjectured for integral $h,k$. 
In both cases, the constants split as a product of an arithmetic factor and a random matrix factor.
The arithmetic factor is a well understood product over primes.
The random matrix factor has many different expressions including combinatorial sums \cite{Dehaye1, Dehaye2, HughesThesis}, a multiple contour integral in the case $h = k$ \cite{CRS}, and a determinant of Bessel functions \cite{ IntJM, CRS}.
For $h,k$ not necessarily equal, the random matrix factor can be solved for finite $N$ and is related to the solution of a Painlevé V type differential equation \cite{PainleveFiniteN}. Furthermore, the limit as $N \rightarrow \infty$ is related to the solution of a certain Painlevé III equation \cite{AKW, IntJM, PainleveFiniteN, FW}.

Previously the asymptotics (\ref{eqn:JMConj}) were known for $h,k \in \{0,1,2\}$ with  $h\leq k$ due to Ingham \cite{InghamMV} and Conrey  \cite{Conrey4thMoments}, and upper bounds of the right order were only known for half integer valued $h,k \leq 2$ due to work of Conrey \cite{ Conrey4thMoments} and Conrey Ghosh \cite{ CGJM}.
The aim of this paper is to establish upper bounds for $\I_T (k,h)$ and $\J_T(k,h)$ of the right order in a larger range of $h$ and $k$.

\begin{theorem}\label{thm:jointMomentBound}
Let $1 \leq k \leq 2$ and $0 \leq h \leq 1$. Then for large $T$  
\[
\I_T (k,h) \ll T(\log T)^{k^2 + 2 h},
\]
and the same bound holds for $\J_T(k,h)$.
\end{theorem}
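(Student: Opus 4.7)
The plan combines Heath-Brown's classical unconditional fractional moment bound $\int_T^{2T}|\zeta(\tfrac12+it)|^{2k}\,dt \ll T(\log T)^{k^2}$ (valid for $0 \le k \le 2$) with Cauchy's integral formula and Hölder's inequality. As a preliminary reduction, the bound for $\J_T(k,h)$ follows from that for $\I_T(k,h)$: since $|Z(t)| = |\zeta(\tfrac12+it)|$ and the unimodular factor in the definition of $Z$ has argument with derivative $\ll \log t$ by Stirling's formula, $|Z'(t)| \ll (\log t)|\zeta(\tfrac12+it)| + |\zeta'(\tfrac12+it)|$. Expanding $|Z|^{2k-2h}|Z'|^{2h}$ and applying Heath-Brown's bound to the cross term yields $\J_T(k,h) \ll \I_T(k,h) + T(\log T)^{k^2+2h}$, so it suffices to bound $\I_T(k,h)$.

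The heart of the argument is the case $h=1$: I claim
\[
\int_T^{2T}|\zeta(\tfrac12+it)|^{2k-2}|\zeta'(\tfrac12+it)|^2\,dt \ll T(\log T)^{k^2+2}, \qquad 1 \le k \le 2.
\]
Applying Cauchy's integral formula on the circle $|w|=r = c/\log T$ centered at $\tfrac12+it$ together with the Cauchy-Schwarz inequality on the contour gives
\[
|\zeta'(\tfrac12+it)|^2 \ll (\log T)^2 \int_0^{2\pi} \bigl|\zeta(\tfrac12+it+re^{i\phi})\bigr|^2 \,d\phi.
\]
Swapping the $\phi$ and $t$ integrals reduces the task to bounding $\int_T^{2T}|\zeta(\tfrac12+it)|^{2k-2}|\zeta(\tfrac12+it+w)|^2\,dt$ uniformly for $|w|\le c/\log T$. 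For $k=1$ this is a shifted second moment, $\ll T\log T$. For $1<k\le 2$, Hölder's inequality with exponents $k/(k-1)$ and $k$ bounds it by $(\int|\zeta(\tfrac12+it)|^{2k}dt)^{(k-1)/k}(\int|\zeta(\tfrac12+it+w)|^{2k}dt)^{1/k}$, and Heath-Brown's bound applied to both factors (extended uniformly to vertical lines within $O(1/\log T)$ of the critical line) gives $T(\log T)^{k^2}$. Combining with the prefactor $(\log T)^2$ yields the claimed $T(\log T)^{k^2+2}$.

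For general $0\le h\le 1$, the pointwise identity $|\zeta|^{2k-2h}|\zeta'|^{2h} = (|\zeta|^{2k})^{1-h}(|\zeta|^{2k-2}|\zeta'|^2)^h$ together with Hölder's inequality in $t$ gives
\[
\I_T(k,h) \le \I_T(k,0)^{1-h}\,\I_T(k,1)^h \ll \bigl(T(\log T)^{k^2}\bigr)^{1-h}\bigl(T(\log T)^{k^2+2}\bigr)^h = T(\log T)^{k^2+2h},
\]
which would complete the proof. The main technical point will be verifying uniformity of Heath-Brown's fractional moment bound for shifts of size $O(1/\log T)$ off the critical line; this is expected to be a careful but essentially routine reinspection of his argument, combined on the left with the functional equation.
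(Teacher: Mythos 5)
Your Hölder interpolation $\I_T(k,h)\le \I_T(k,1)^h\,\I_T(k,0)^{1-h}$ is exactly the reduction the paper makes, and your treatment of $\J_T$ via $|Z'(t)|\ll(\log t)|\zeta(\tfrac12+it)|+|\zeta'(\tfrac12+it)|$ is a clean simplification (the paper instead carries the $Z$ case through each step in parallel). Where you part ways with the paper is the heart of the matter, the $h=1$ endpoint. You replace $|\zeta'|^2$ by an average of $|\zeta|^2$ over a circle of radius $\asymp 1/\log T$ via Cauchy's formula, apply H\"older in $t$ with exponents $k/(k-1)$ and $k$, and reduce to the single claim that
\[
\int_T^{2T}\bigl|\zeta(\tfrac12+\sigma+it)\bigr|^{2k}\,dt\ll T(\log T)^{k^2}
\]
uniformly for $|\sigma|\le c/\log T$. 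The paper instead works with twisted second and fourth moment formulae carrying explicit complex shifts $\a,\b$ (Young; Bettin--Bui--Li--Radziwi\l\l), differentiates the main terms in $\a,\b$ to produce $\zeta'$ directly (Young's trick), and then runs the Heap--Radziwi\l\l--Soundararajan interpolation machinery on those twisted joint moments. Both routes lean on the same underlying inputs; the paper's route has the shifts built in from the start, while yours externalises the derivative via Cauchy and pushes the burden onto a uniformity statement.

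That uniformity statement is the genuine gap. It is not in the literature in the form you need, and it is not a ``routine reinspection'': Theorem~1 of Heap--Radziwi\l\l--Soundararajan (not Heath-Brown, whose unconditional result covers only $k=1/n$) is proved on the critical line, and re-running their argument at $\sigma=\tfrac12+O(1/\log T)$ means reproving their Propositions~2 and~3 with shifted twisted moment formulae and re-estimating all the resulting contour integrals --- which is essentially the content of the present paper's Lemmas~\ref{lem:twisted2} and~\ref{lem:twisted4} and Propositions~\ref{prop:twisted2nd} and~\ref{prop:twisted4th}. A cleaner route to the uniformity you want would be Gabriel's convexity theorem: $\log\int|\zeta(\sigma+it)|^{2k}\phi(t/T)\,dt$ is convex in $\sigma$, so interpolating between $\sigma=\tfrac12$ (where HRS gives $T(\log T)^{k^2}$) and $\sigma=\tfrac12+\delta$ for a fixed small $\delta$ (where the moment is $\ll_\delta T(\log T)^{O(1)}$) gives $T(\log T)^{k^2+O(1/\log T)}\ll T(\log T)^{k^2}$ for $\sigma=O(1/\log T)$, with the left of the line handled by the functional equation as you indicate. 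If you supply that convexity argument (taking care with the smoothing and the pole), the proof goes through and is arguably shorter than the paper's; as written, though, the key analytic input is asserted rather than proved.
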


The proof we give is based on the work of Heap, Radziwiłł and Soundararajan \cite{HRS} which in turn is based on the method introduced in Radziwiłł and Soundararajan \cite{RS-EC}.
The general principle in these works is that if one can compute the $2k^\text{th}$ moment of a given $L$-function twisted by an arbitrary Dirichlet polynomial then one can find upper bounds of the right order for all of its lower order moments. 
In particular, this approach is used to prove Theorem \ref{thm:jointMomentBound} in the case $h = 0$.
We combine the ideas of the paper \cite{HRS} with twisted joint moment calculations to deduce Theorem \ref{thm:jointMomentBound} in the case of $h = 1$ and then deduce the result from Hölder's inequality– the bounds we obtain are of the right order since the exponent of $\log T$ in (\ref{eqn:JMConj}) is linear in $h$. 
We are forced to take $k\in [1,2]$ because $2k - 2h$ is only nonnegative when $k \geq 1$ at the boundary case $h = 1$.
It is likely that one could establish sharp bounds on $\I_T(k,h)$ and $\J_T(k,h)$ in the full range $k > -\tfrac{1}{2}$ and $-\frac{1}{2} < h \leq k + \tfrac{1}{2}$ assuming the Riemann hypothesis.

\section*{Acknowledgements}
The author would like to thank his supervisor Jonathan P. Keating for introducing him to this problem and for his encouragement. 

\section{Outline of the Proof}

We will deduce Theorem \ref{thm:jointMomentBound} from the following.

\begin{prop}\label{prop:h=1 bound}
Let $T$ be large and $1 \leq k \leq 2$. Then
\[
\int_{T}^{2T} |\zeta(\tfrac{1}{2} + i t)|^{2k - 2} |\zeta'(\tfrac{1}{2} + i t)|^2 dt \ll T(\log T)^{k^2 + 2},
\]
and the same bound holds when $\zeta(\tfrac{1}{2} + i t)$ is replaced by $Z(t)$.
\end{prop}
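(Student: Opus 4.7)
The strategy is to adapt the Heap--Radziwi\l\l--Soundararajan (HRS) method, replacing the role played by $|\zeta(\tfrac12+it)|^2$ in their decomposition with $|\zeta'(\tfrac12+it)|^2$. The principle is unchanged: bound $|\zeta(\tfrac12+it)|^{2k-2}$ by $|M(\tfrac12+it)|^2$ for a short Dirichlet polynomial $M$ outside of a small exceptional set, and evaluate the resulting twisted second moment, which is now a second moment of $\zeta'$ rather than of $\zeta$.

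Concretely, fix a small $\theta>0$, partition the primes $p\leq T^{\theta}$ into dyadic blocks $B_1,\dots,B_J$ with $B_j=\{p:T^{\theta_{j-1}}<p\leq T^{\theta_j}\}$, and let $P_j(s)=\sum_{p\in B_j}p^{-s}$. Set $N_j(s,\alpha)=\sum_{r\leq \ell_j}(\alpha P_j(s))^r/r!$ where $\ell_j$ is chosen large enough that $N_j(\tfrac12+it,\alpha)$ closely approximates $\exp(\alpha P_j(\tfrac12+it))$ whenever $|P_j(\tfrac12+it)|$ is not too large, and take $M(s)=\prod_j N_j(s,k-1)$. Define the good set $\mathcal{G}\subseteq[T,2T]$ on which each $|P_j(\tfrac12+it)|$ is below the HRS threshold $V_j$; its complement decomposes as $\bigcup_j \mathcal{B}_j$ where block $j$ is large. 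The approximation $\log\zeta(\tfrac12+it)\approx\sum_j P_j(\tfrac12+it)$ on $\mathcal{G}$ then gives
\[
|\zeta(\tfrac12+it)|^{2k-2}\ll |M(\tfrac12+it)|^2\qquad(t\in\mathcal{G}),
\]
so the good-set piece is controlled by the twisted second moment
\[
\int_T^{2T}|M(\tfrac12+it)|^2|\zeta'(\tfrac12+it)|^2\,dt.
\]
The plan is to deduce this from the shifted twisted second moment
\[
\int_T^{2T}\zeta(\tfrac12+\alpha+it)\,\overline{\zeta(\tfrac12+\beta+it)}\,|M(\tfrac12+it)|^2\,dt
\]
by differentiating in $\alpha$ and $\beta$ and setting $\alpha=\beta=0$. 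For $M$ of length up to $T^{1-\varepsilon}$ the shifted estimate follows from standard approximate-functional-equation arguments for twisted second moments. Expanding $|M|^2$ as a Dirichlet series, the main term produces an arithmetic factor that is shaped by the combinatorial identities the $N_j$ satisfy, and which evaluates to $\ll T(\log T)^2 \cdot (\log T)^{(k-1)^2}=T(\log T)^{k^2+2}$.

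The bad sets are treated exactly as in HRS: each $\mathcal{B}_j$ has measure $\ll T\exp(-cV_j^2/\ell_j)$ by a high moment bound on $P_j$, and $\int_{\mathcal{B}_j}|\zeta|^{2k-2}|\zeta'|^2$ is estimated by applying the same twisted second moment of $\zeta'$ above, with $|M|^2$ replaced by $|M|^2|P_j/V_j|^{2r}$ for a suitable $r$; the ensuing geometric gain over $j$ absorbs these contributions. The $Z$-function statement is obtained in parallel, since $|Z(t)|=|\zeta(\tfrac12+it)|$ and $|Z'(t)|$ differs from $|\zeta'(\tfrac12+it)|$ by a smoothly varying archimedean factor of size $O(\log T)$ that only enters lower-order terms. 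The principal technical obstacle is the shifted twisted second moment of $\zeta'$ with a Dirichlet polynomial of length $T^{2\theta}$: one must track the error in the shifted version carefully through differentiation so that the resulting estimate is valid with a power-saving error term uniformly for polynomials of the required length, since it is the combination of this power saving and the combinatorics of $M$ that delivers precisely the factor $(\log T)^{k^2+2}$.
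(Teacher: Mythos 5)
There is a genuine gap at the central step. You claim that on the good set $\mathcal{G}$ (defined solely by the conditions $|P_j(\tfrac12+it)|\leq V_j$) one has $|\zeta(\tfrac12+it)|^{2k-2}\ll |M(\tfrac12+it)|^2$ because ``$\log\zeta(\tfrac12+it)\approx\sum_j P_j(\tfrac12+it)$ on $\mathcal{G}$.'' No such pointwise approximation is known unconditionally: the good set is detected only through the sizes of the prime sums $P_j$, which say nothing about whether $\zeta$ itself is close to $\exp\bigl(\sum_j P_j\bigr)$ at a given point. A pointwise upper bound for $\log|\zeta(\tfrac12+it)|$ by a short prime sum is exactly what requires the Riemann hypothesis (as in Soundararajan's and Harper's conditional moment bounds), and the whole point of the Heap--Radziwi{\l}{\l}--Soundararajan mechanism that the paper adapts is to avoid it: one uses a pointwise convexity inequality (the paper's Proposition \ref{prop:interpolation}, with exponents $\tfrac{1}{k-1}$ and $\tfrac{1}{2-k}$) in which $\zeta$ appears only to the exponents $0$ and $2$, multiplied by short Dirichlet polynomials $\prod_j\mathcal{N}_j(s;k-2)$ and $\prod_j\mathcal{N}_j(s;k-1)$, plus correction terms carrying $|\p_v/(50P_v)|^{2\lceil 50P_v\rceil}$. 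Because of this, for $1\leq k\leq 2$ the argument genuinely needs the \emph{twisted fourth moment} $\int|\zeta|^2|\zeta'|^2|A(\tfrac12+it)|^2\,dt$ (via the shifted formulas of Hughes--Young and Bettin--Bui--Li--Radziwi{\l}{\l}, differentiated in the shifts), not just the twisted second moment of $\zeta'$; your proposal dispenses with the fourth moment entirely, which is a sign the reduction cannot be carried out as described. The same problem recurs on the bad sets: for $k>1$ the factor $|\zeta|^{2k-2}$ is a positive power of $\zeta$ and cannot simply be dropped or replaced by $|M|^2$ there.

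Two smaller points. The claim that the (shifted) twisted second moment is available for Dirichlet polynomials of length $T^{1-\varepsilon}$ is not correct: unconditionally this is known with shifts up to length $T^{1/2-\varepsilon}$ (Young) and without shifts up to $T^{17/33-\varepsilon}$ (Bettin--Chandee--Radziwi{\l}{\l}); the paper works with polynomials of length at most $T^{1/10}$ precisely so that both the second- and fourth-moment inputs apply. Your treatment of $Z$ is acceptable in outcome, though the archimedean term is not lower order: from $|Z'(t)|^2\ll(\log T)^2|\zeta(\tfrac12+it)|^2+|\zeta'(\tfrac12+it)|^2$ the extra piece contributes $\ll T(\log T)^{k^2+2}$, i.e.\ the same order, using the HRS bound for $\int|\zeta|^{2k}$; the paper instead runs the same contour-integral argument with the simpler $Z$-version of the shifted formulas.
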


\begin{proof}[Proof of Theorem \ref{thm:jointMomentBound}]
Recall theorem 1 of \cite{HRS} gives for $0\leq k \leq 2$
\[
\int_{T}^{2T} |\zeta(\tfrac{1}{2} + i t)|^{2k} dt \ll T(\log T)^{k^2}.
\]
Therefore by Hölder's inequality with $p = \tfrac{1}{h}$ and $q = \tfrac{1}{1-h}$, this estimate and Theorem \ref{thm:jointMomentBound} give
\begin{align*}
    \I_T (k,h) 
    \leq \Bigg(\int_T^{2T}|\zeta(\tfrac{1}{2} + i t)|^{2k - 2} |\zeta'(\tfrac{1}{2} + i t)|^2 &dt \Bigg) ^{h} \left(\int_T^{2T} |\zeta(\tfrac{1}{2} + i t)|^{2k} dt\right)^{1-h}\\
    &\ll T(\log T)^{k^2 + 2 h}.
\end{align*}
The case of the joint moments of $Z(t)$ is similar since $|Z(t)| = |\zeta(\tfrac{1}{2} + i t)|$.
\end{proof}
\noindent
To prove Proposition \ref{prop:h=1 bound}, we will approximate the logarithm of $\zeta(s)$ by a truncated sum over primes $\sum_{p\leq X} p^{-s}$.
Following the works \cite{Harper, LDSCLT, SRHMoments}, we will break up this sum into increments which have progressively smaller variance. 
This in turn allows us to work with a Dirichlet polynomial of length $T^\theta$ for some small but fixed $\theta > 0$, which is long enough to give a good enough approximation of $\zeta(s)$.

We follow the notation introduced in \cite{HRS}.
Denote by $\log_j$ the $j$-fold iterated logarithm, and take $\ell$ to be the largest integer so that $\log_\ell T \geq 10^4$.
Now define a sequence $T_j$ for $1\leq j \leq \ell$ by $T_1 = e^2$ and 
\[
T_j = \exp\left(\frac{\log T}{(\log_j T)^2} \right)
\]
for $2\leq j \leq \ell$, and for $2\leq j \leq \ell$ and $s \in \C$ set
\[
\p_j(s) = \sum_{T_{j-1} \leq p < T_j} \frac{1}{p^s},  \quad \text{ and } \quad P_j = \sum_{T_{j-1} \leq p < T_j} \frac{1}{p}.
\]
The hope is then that on average $\log \zeta(s)$ will be controlled by the sum of the increments $\p_j(s)$, where $P_j$ is the variance of the $j^\text{th}$ increment on the half line.
By Merten's second estimate, note that
\[
P_j \sim 2 \log_j T - 2 \log_{j+1} T. 
\]
Next define for $2\leq j \leq \ell$ the truncated Taylor expansion
\[
\n_j(s;\a) = \sum_{\substack{ T_{j-1} \leq p < T_{j} \\ \Omega(n) \leq 500P_j}} \frac{\a^{\Omega(n)} g(n)}{n^s}
\]
where $g$ is the multiplicative function given by $g(p^m) = 1/m!$ on prime powers.
So for most $t \in [T,2T]$ we expect $\prod_{2\leq j \leq \ell} \n_j(\tfrac{1}{2} + i t; \a)$ to behave similarly to $\zeta(\tfrac{1}{2} + i t)^\a$.
Now each $\n_j$ is a Dirichlet polynomial of length at most $T_j^{500 P_j}$ so $\prod_{2\leq j \leq \ell} \n_j(\tfrac{1}{2} + i t; \a)$ is a Dirichlet polynomial of length at most $T^{1/10}$, which is amenable to analysis.

We will deduce Proposition \ref{prop:h=1 bound} in two steps. First we bound the integrand by a product of integral powers of $\zeta$ and $\zeta'$ with short Dirichlet polynomials.

\begin{prop}\label{prop:interpolation}
For $1 \leq k \leq 2$ and $s = \tfrac{1}{2} + i t$ with $t\in \R$
\begin{align*}
    |\zeta&(s)|^{2k - 2} |\zeta'(s)|^2 \leq 2k |\zeta(s)|^2|\zeta'(s)|^2 \prod_{2\leq j \leq \ell} |\n_{j}(s;k-2)|^2 + (4-2k) |\zeta'(s)|^2 \prod_{2\leq j \leq \ell}|\n_j(s;k-1)|^2\\
    &+ \sum_{2\leq v \leq \ell} \left(2k |\zeta(s)|^2|\zeta'(s)|^2 \prod_{2\leq j < v} |\n_{j}(s;k-2)|^2 + (4-2k) |\zeta'(s)|^2\prod_{2\leq j \leq \ell}|\n_j(s;k-1)|^2\right)\Bigg|\frac{\p_v(s)}{50 P_v}\Bigg|^{2 \lceil 50 P_v \rceil}.
\end{align*}
The same bound holds when $\zeta(s)$ is replaced by $Z(t)$.
\end{prop}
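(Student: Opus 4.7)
This inequality is a pointwise interpolation in the spirit of the bound used in \cite{HRS}. The two main Dirichlet polynomial products on the right-hand side are both meant to approximate $|\zeta(s)|^{2k-2}|\zeta'(s)|^2$: the first via $|\zeta(s)|^2 \prod_j|\n_j(s;k-2)|^2 \approx |\zeta(s)|^{2k-2}$ (since formally $\prod_j|\n_j(s;k-2)|^2 \approx |\zeta(s)|^{2k-4}$), and the second via $\prod_j|\n_j(s;k-1)|^2 \approx |\zeta(s)|^{2k-2}$ directly. The factor $|\zeta'(s)|^2$ is an inert common multiplier, never touched by any approximation step; hence the analogous bound with $|Z'(t)|^2$ in its place follows word for word from the same derivation, since $|Z(t)|=|\zeta(1/2+it)|$ enters only through the quantities already appearing.

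The plan is to combine an HRS-style level-by-level approximation with a convex combination argument. The key pointwise lemma I would establish is that, for any real exponent $\alpha$,
\[
|\zeta(s)|^{2\alpha} \leq \prod_{2\leq j \leq \ell}|\n_j(s;\alpha)|^2 + \sum_{2\leq v \leq \ell} \prod_{2 \leq j < v}|\n_j(s;\alpha)|^2 \left|\frac{\p_v(s)}{50 P_v}\right|^{2\lceil 50 P_v \rceil},
\]
proved by the dichotomy at each level: since $\n_v(s;\alpha)$ is the Taylor polynomial of $\exp(\alpha\p_v(s))$ truncated at degree $\lceil 500 P_v \rceil$, either $|\p_v(s)|$ is sufficiently small that the Taylor tail is negligible and $|\n_v(s;\alpha)|^2$ closely matches $\exp(2\alpha\,\mathrm{Re}\,\p_v(s))$, or the factor $|\p_v(s)/(50 P_v)|^{2\lceil 50 P_v\rceil}$ is already at least $1$ and absorbs the deficit; telescoping over $v=2,\dots,\ell$ produces the displayed sum. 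I would then apply this pointwise bound with $\alpha = k-2$ (to control $|\zeta(s)|^{2k-4}$) and with $\alpha = k-1$ (to control $|\zeta(s)|^{2k-2}$), multiply through by $|\zeta'(s)|^2$, and combine via the tangent-line inequality $y^{k-1} \leq (k-1) y + (2-k)$ at $y=|\zeta(s)|^2$ (valid by concavity of $y\mapsto y^{k-1}$ on $[0,\infty)$ for $k\in[1,2]$). The coefficients $2k$ and $4-2k$ in the statement arise by doubling the convexity weights $(k-1),(2-k)$ to absorb the factor-of-$2$ slack needed when passing from an HRS-approximation upper bound on $|\zeta(s)|^{2\alpha}$ to a genuine pointwise inequality for the interpolated quantity.

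The main technical obstacle is establishing the above pointwise lemma for the negative exponent $\alpha = k-2\in[-1,0]$, whereas the corresponding bound in \cite{HRS} is formulated for non-negative exponents. The extension is conceptually routine since the Taylor remainder estimate $|\exp(\alpha\p_v(s)) - \n_v(s;\alpha)| \leq |\alpha\p_v(s)|^{\lceil 500 P_v\rceil+1}/(\lceil 500 P_v\rceil+1)!$ holds for any complex $\alpha$, but the coefficients $(k-2)^{\Omega(n)}$ in $\n_v(s;k-2)$ alternate in sign, so one cannot derive lower bounds on $|\n_v(s;k-2)|^2$ by collecting non-negative terms. Instead the dichotomy must be proved by working throughout with the modulus $|\n_v(s;k-2)|$ and deploying the triangle inequality, after which the remaining steps (telescoping, interpolation, and absorption of $|\zeta'(s)|^2$) are straightforward.
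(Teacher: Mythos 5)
Your high-level intuition is in the right spirit (Taylor-approximation increments, a dichotomy absorbed by the $\p_v$ powers, a convexity step), but the two pillars of your plan are both flawed, and the resulting argument would not yield the proposition.

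First, the ``key pointwise lemma'' is false for the exponents you need. You state it for any real $\alpha$ and then apply it with $\alpha = k-2 \in [-1,0]$. But for negative $\alpha$, the left-hand side $|\zeta(s)|^{2\alpha}$ blows up as $s$ approaches a zero of $\zeta$, whereas the right-hand side is a finite sum of moduli of Dirichlet polynomials and stays bounded for each fixed $t$. There is no salvaging this: no pointwise inequality can upper-bound a negative power of $|\zeta|$ by Dirichlet-polynomial data alone. The HRS argument never asserts such a lemma; the $\n_j(s;k-2)$ factors only ever appear already multiplied by a compensating positive power of $|\zeta|$, so that the quantity being compared is a nonnegative power of $|\zeta|$ throughout.

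Second, the combination step is not doing what you want. Applying the tangent-line inequality $y^{k-1}\leq (k-1)y+(2-k)$ at $y=|\zeta(s)|^2$ and multiplying by $|\zeta'(s)|^2$ gives
\[
|\zeta(s)|^{2k-2}|\zeta'(s)|^2 \leq (k-1)|\zeta(s)|^2|\zeta'(s)|^2 + (2-k)|\zeta'(s)|^2,
\]
which has discarded all the $\n_j$ factors. Both terms on the right have moments of size $T(\log T)^6$ and $T(\log T)^3$, far larger than the target $T(\log T)^{k^2+2}$ for $k<2$. The short Dirichlet polynomials are the whole point of the proposition; a convexity step that eliminates them cannot work. (Also, doubling $(k-1,2-k)$ gives $(2k-2,4-2k)$, not the $(2k,4-2k)$ appearing in the statement, so even the bookkeeping does not close.)

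The correct route, which is what the paper invokes by citing proposition~1 of \cite{HRS}, is to apply the weighted AM--GM (equivalently, Young's inequality with conjugate exponents $p=\tfrac{1}{k-1}$, $q=\tfrac{1}{2-k}$) directly to the pair
\[
a = |\zeta(s)|^2\prod_{2\leq j\leq\ell}|\n_j(s;k-2)|^2,
\qquad
b = \prod_{2\leq j\leq\ell}|\n_j(s;k-1)|^2,
\]
so that $a^{k-1}b^{2-k}\leq (k-1)a+(2-k)b$. By design, the formal exponent of $\exp(\sum_j\p_j(s))$ in $a^{k-1}b^{2-k}$ cancels and the geometric mean reduces to $|\zeta(s)|^{2k-2}$ up to the cumulative Taylor error in the $\n_j$'s. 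The dichotomy then enters to control that error: when all $|\p_v|\leq 50P_v$ the discrepancy is uniformly bounded away from zero and one absorbs it into the constants; when some $|\p_{v_0}|>50P_{v_0}$ one truncates the products at $v_0-1$ and the factor $|\p_{v_0}/(50P_{v_0})|^{2\lceil 50P_{v_0}\rceil}\geq 1$ absorbs the deficit, producing the summed error terms. Finally one multiplies the whole inequality through by $|\zeta'(s)|^2$ (or $|Z'(t)|^2$), which rides along passively. Your proposal does not set up this geometric/arithmetic-mean comparison, and without it the $\n_j$ factors never enter the bound correctly.
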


\noindent
The proof of Proposition \ref{prop:interpolation} is almost identical to the proof of proposition 1 in \cite{HRS}, so it is omitted.
The only difference is that one uses the conjugate exponents $p = \tfrac{1}{k-1}$ and $q = \tfrac{1}{2-k}$, and then one multiplies the resulting inequality by $|\zeta'(s)|^2$ or $|Z'(t)|^2$. 
This reduces the proof of Proposition \ref{prop:h=1 bound} to the calculation of two types of twisted moments.

\begin{prop} \label{prop:twisted2nd}
For $1\leq k \leq 2$
\begin{equation}\label{eqn:twisted2Nint}
\int_T^{2T}  | \zeta'(\tfrac{1}{2} + i t)|^2 \prod_{2\leq j \leq \ell} |\n_{j}(\tfrac{1}{2} + i t;k-1)|^2 dt \ll T (\log T)^{k^2 + 2}
\end{equation}
and for $2\leq j \leq \ell$ and $0 \leq r \leq 2\lceil 50 P_v \rceil$
\begin{align*} 
\int_T^{2T} |\zeta'(\tfrac{1}{2} + i t)|^2 \prod_{2\leq j < v} |&\n_{j}(\tfrac{1}{2} + i t;k-1)|^2 |\p_v(\tfrac{1}{2} + i t)|^{2r} dt \stepcounter{equation}\tag{\theequation}\label{eqn:twisted2Pint} \\
&\ll T (\log T)^3 (\log T_{v-1})^{k^2-1} \left(2^r r! P_v^r \exp(P_v)\right),
\end{align*}
and the same bounds hold when $\zeta(\tfrac{1}{2} + i t)$ is replaced by $Z(t)$.
\end{prop}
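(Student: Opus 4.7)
The plan is to reduce both estimates to a twisted \emph{shifted} second moment for $\zeta$ and then recover $|\zeta'|^2$ by differentiating in the shifts. The key input is an asymptotic of the form
\begin{align*}
\int_T^{2T}\zeta(\tfrac{1}{2}+\alpha+it)&\zeta(\tfrac{1}{2}+\beta-it)|B(\tfrac{1}{2}+it)|^2 dt \\
&=\int_T^{2T}\left(\zeta(1+\alpha+\beta)\mathcal{B}(\alpha,\beta)+\bigl(\tfrac{t}{2\pi}\bigr)^{-\alpha-\beta}\zeta(1-\alpha-\beta)\mathcal{B}(-\beta,-\alpha)\right)dt+E,
\end{align*}
valid uniformly in small complex shifts $\alpha,\beta$ for any Dirichlet polynomial $B(s)=\sum_{n\leq T^{1/10}}b(n)n^{-s}$ with real coefficients, where
\[
\mathcal{B}(\alpha,\beta)=\sum_{m,n}\frac{b(m)b(n)(m,n)^{1+\alpha+\beta}}{m^{1+\alpha}n^{1+\beta}}
\]
and $E$ is a negligible error. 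Such twisted shifted second moments go back to Balasubramanian--Conrey--Heath-Brown and are classical. Applying $\partial_\alpha\partial_\beta|_{\alpha=\beta=0}$ replaces the $\zeta\bar\zeta$ on the left-hand side by $|\zeta'|^2$; on the right, the simple poles of $\zeta(1\pm(\alpha+\beta))$ cancel between the two main terms after expanding $(t/2\pi)^{-(\alpha+\beta)}$, leaving $T$ times a cubic polynomial in $\log(T/2\pi)$ whose coefficients are explicit combinations of $\mathcal{B}(0,0)$ and its first and second partial derivatives at the origin.

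For (\ref{eqn:twisted2Nint}) I would take $B=\prod_{2\leq j\leq\ell}\n_j(\cdot;k-1)$, so that $b(n)=(k-1)^{\Omega(n)}g(n)$ on the relevant set of integers. Since the primes supporting different $\n_j$ are disjoint, $\mathcal{B}(\alpha,\beta)$ admits an Euler product; the local factor at a prime $p$ evaluated at $\alpha=\beta=0$ equals
\[
\sum_{u,v\geq 0}\frac{(k-1)^{u+v}}{u!\,v!}\cdot\frac{1}{p^{\max(u,v)}}=1+\frac{k^2-1}{p}+O(p^{-2}),
\]
matching the Euler factor of $(1-p^{-1})^{-(k^2-1)}$ to leading order. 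Summing over $p<T_\ell$ then gives $\mathcal{B}(0,0)\asymp(\log T)^{k^2-1}$, and the same order bound holds for its low-order partial derivatives. Combined with the cubic polynomial in $\log(T/2\pi)$ from the shift expansion, this yields the bound $T(\log T)^{k^2+2}$.

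For (\ref{eqn:twisted2Pint}) I would take $B=\p_v^r\prod_{2\leq j<v}\n_j(\cdot;k-1)$. Since the primes in $[T_{v-1},T_v)$ are disjoint from those supporting the $\n_j$, the arithmetic factor splits: the contribution from primes $p<T_{v-1}$ is $\asymp(\log T_{v-1})^{k^2-1}$ by the previous analysis, while the contribution from primes in $[T_{v-1},T_v)$ must be handled by directly expanding $|\p_v(\tfrac{1}{2}+it)|^{2r}$ as a sum over pairs of $r$-tuples of primes from the interval. The dominant ``Wick-type'' diagonal pairs each of the $r$ primes on one side with one on the other, giving $r!$ pairings and a factor $P_v=\sum_{T_{v-1}\leq p<T_v}1/p$ per pair, for a total of $r!\,P_v^r$; the residual Euler product $\prod_{T_{v-1}\leq p<T_v}(1+O(p^{-1}))=\exp(O(P_v))$ together with the bookkeeping for the interactions between the pairings and the $\zeta,\bar\zeta$ factors accounts for the additional $2^r\exp(P_v)$.

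The most delicate step is (\ref{eqn:twisted2Pint}), where the factor $2^r r! P_v^r\exp(P_v)$ must be tracked uniformly in the shifts $\alpha,\beta$ so as to justify the differentiation. Because $r$ can be as large as $2\lceil 50P_v\rceil$, crude bounds on $\mathcal{B}(\alpha,\beta)$ lose factorial factors; instead one expands the diagonal of $|\p_v|^{2r}$ combinatorially in the spirit of Heap--Radziwiłł--Soundararajan and estimates the contribution from each type of pairing separately.
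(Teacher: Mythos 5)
Your route is in essence the paper's: absorb $\prod_j\n_j(\cdot;k-1)$ (resp.\ that product times $\p_v^r$) into the twist polynomial of a shifted twisted second moment, differentiate in the shifts at zero to produce $|\zeta'|^2$, and then estimate the arithmetic factor by an Euler product for (\ref{eqn:twisted2Nint}) and a combinatorial bound for (\ref{eqn:twisted2Pint}). The paper packages the differentiation once and for all in Lemma \ref{lem:twisted2}, rewriting the shifted main term as a double contour integral over $|z_j|=3^j/\log T$ and applying Cauchy's formula, which yields $T(\log T)^3\max_{|z_j|=3^j/\log T}|F(z_1,z_2)|$; since $(m,n)^{z_1+z_2}m^{-z_1}n^{-z_2}\ll1$ for $m,n\leq T^{1/10}$, one never differentiates the arithmetic factor. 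In your direct differentiation, the claim that the partial derivatives of $\mathcal{B}$ at the origin are of the same order as $\mathcal{B}(0,0)$ is false: each derivative brings down $\log(m/(m,n))$ or $\log(n/(m,n))$, and the logarithmic derivative of the Euler product contributes $\asymp(k-1)\sum_{p<T_\ell}\log p/p\asymp\log T$, so $\partial_\alpha\mathcal{B}(0,0)$ is typically of size $(\log T)^{k^2}$ for $k>1$. The final bound survives only because each derivative on $\mathcal{B}$ is paired with one fewer power of $\log(t/2\pi)$ in the expanded main term; you must either track this degree bookkeeping explicitly or use the contour formulation, which sidesteps it. (Ignoring the truncation $\Omega(n)\leq 500P_j$ is harmless here since all coefficients are nonnegative for $k\geq1$, though this deserves a sentence.)

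For (\ref{eqn:twisted2Pint}) your sketch is too loose at the decisive point, the factor $2^r r!P_v^r\exp(P_v)$ uniformly in $r\leq2\lceil50P_v\rceil$. Once $\p_v^r$ is part of the twist polynomial, its coefficients are $r!\,g(n)$ on integers $n$ with $\Omega(n)=r$ supported on primes in $[T_{v-1},T_v)$, and what must be bounded is the arithmetic sum $\sum_{\Omega(m)=\Omega(n)=r}r!^2g(m)g(n)/[m,n]$; the paper does this by summing over the common factor $d$ with $\Omega(d)=j$ and using $g(nd)\leq g(n)$, arriving at $r!^2\sum_{j}\bigl(P_v^j/j!\bigr)\bigl(P_v^{r-j}/(r-j)!\bigr)\leq2^rr!P_v^r\exp(P_v)$. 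Your attribution of the $2^r\exp(P_v)$ to ``interactions between the pairings and the $\zeta,\bar\zeta$ factors'' misplaces its source: the zeta factors are already handled by the shifted twisted moment formula (whose applicability requires checking that the total length $\prod_{j<v}T_j^{500P_j}\cdot T_v^{\,r}$ stays below the admissible exponent, a point you assert but do not verify), and the factor comes purely from this gcd sum inside the twist coefficient. A Wick-pairing diagonal heuristic does not by itself deliver a bound of the required quality uniformly in $r$; you need the explicit combinatorial estimate above. With these two repairs your argument coincides with the paper's proof.
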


\begin{prop}\label{prop:twisted4th}
For $1\leq k \leq 2$
\begin{equation}\label{eqn:twisted4Nint}
\int_T^{2T} |\zeta(\tfrac{1}{2} + i t)|^2 | \zeta'(\tfrac{1}{2} + i t)|^2 \prod_{2\leq j \leq \ell} |\n_{j}(\tfrac{1}{2} + i t;k-2)|^2 dt \ll T (\log T)^{k^2 + 2}
\end{equation}
and for $2\leq v \leq \ell$ and $0 \leq r \leq 2\lceil 50 P_v \rceil$
\begin{align*}
\int_T^{2T} |\zeta(\tfrac{1}{2} + i t)|^2 | \zeta'(\tfrac{1}{2} + i t)|^2 \prod_{2\leq j < v} |&\n_{j}(\tfrac{1}{2} + i t;k-2)|^2 |\p_v(\tfrac{1}{2} + i t)|^{2r} dt  \stepcounter{equation}\tag{\theequation}\label{eqn:twisted4Pint}\\  
&\ll  T(\log T)^6 (\log T_{v-1})^{k^2 - 4} \left(18^r r! P_v^r \exp(P_v) \right),
\end{align*}
and the same bounds hold when $\zeta(\tfrac{1}{2} + i t)$ is replaced by $Z(t)$.
\end{prop}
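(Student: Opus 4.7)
The plan is to reduce Proposition \ref{prop:twisted4th} to a shifted and twisted fourth moment formula for $\zeta$, with the derivative factors handled via Cauchy's integral formula. Writing
\[
\zeta(s)\zeta'(s) = \frac{1}{2\pi i}\oint_{|w|=1/\log T}\frac{\zeta(s)\zeta(s+w)}{w^{2}}\,dw,
\]
taking absolute values squared, and applying Fubini, one reduces bounding both \eqref{eqn:twisted4Nint} and \eqref{eqn:twisted4Pint} to uniform control, for $|w_1|,|w_2|=1/\log T$, of the shifted twisted fourth moment
\[
\int_T^{2T}\zeta(s)\zeta(s+w_1)\overline{\zeta(s)\zeta(s+w_2)}\,|M(s)|^{2}\,dt,
\]
where $s=\tfrac{1}{2}+it$ and $M$ is the relevant Dirichlet polynomial twist: $M=\prod_{2\le j\le\ell}\n_j(s;k-2)$ for \eqref{eqn:twisted4Nint} and $M=\p_v(s)^{r}\prod_{2\le j<v}\n_j(s;k-2)$ for \eqref{eqn:twisted4Pint}. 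The $Z(t)$ versions reduce identically, modulo bounded smooth weights coming from the ratio of gamma factors and the derivative of the Riemann--Siegel phase (which produces an auxiliary $(\log T)^{2}$ term of the form of the untwisted $2k$-th moment, already handled by \cite{HRS}).

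The key analytic input is an asymptotic $T\cdot\mathcal F(w_1,w_2;M)+O(T^{1-\delta})$ for this shifted twisted fourth moment, uniform for $M$ of length $\le T^{1/10}$ and shifts $|w_i|\le 1/\log T$. This is a standard shifted twisted fourth moment of $\zeta$: apply an approximate functional equation to each of the four zeta factors, evaluate the diagonal contribution via an Euler product, and bound the off-diagonal using a Deshouillers--Iwaniec-type estimate for Kloosterman fractions, in the spirit of Hughes--Young or Bettin--Bui--Li--Radziwi{\l\l}. The length $T^{1/10}$ sits comfortably below the $T^{1/2-\epsilon}$ threshold for which such asymptotics are known, so there is ample room in the off-diagonal estimation.

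Given this asymptotic, $\mathcal F$ factors as an Euler product over the primes supporting $M$. For \eqref{eqn:twisted4Nint}, using that $\n_j(s;k-2)$ is the truncated Taylor polynomial of $\exp\bigl((k-2)\sum_{T_{j-1}\le p<T_j}p^{-s}\bigr)$, combining the twist with the four zeta factors gives effective exponent $k^2$ prime-by-prime, so Mertens' theorem over $[T_{j-1},T_j)$ produces an Euler product of size $(\log T)^{k^{2}}$; performing the Cauchy contour integrals at radius $1/\log T$ amounts to applying $\partial_{w_1}\partial_{\bar w_2}$ to $\mathcal F$ at $w_1=w_2=0$, releasing the additional $(\log T)^{2}$ that matches $(\log T)^{k^2+2}$. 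For \eqref{eqn:twisted4Pint}, the factor $|\p_v(s)|^{2r}$ is expanded into a sum of $r!$ pairings of its $2r$ prime variables, each prime absorbed into one of the four zeta factors in $\mathcal F$; this combinatorial bookkeeping yields the weight $18^{r}\,r!\,P_v^{r}$, with the constant $18$ arising from counting absorption configurations combined with the coefficient of the fourth-moment main term. The truncated twists $\n_j$ with $j<v$ contribute $(\log T_{v-1})^{k^2-4}$ by the same Euler product computation restricted to primes below $T_{v-1}$, while the missing Taylor tails above level $v$ produce the factor $\exp(P_v)$.

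The main obstacle is the second step: establishing a shifted twisted fourth moment formula with simultaneous uniformity in shifts $|w_i|\le 1/\log T$, in the Dirichlet polynomial length up to $T^{1/10}$, and with explicit control of the main term as a functional of the twist coefficients. The uniformity in shifts is essential because the $(\log T)^{2}$ gain arises only after differentiating $\mathcal F$ in $w_1$ and $\bar w_2$; any secondary error growing like $(\log T)^{A}$ in the shifts would destroy the gain. A secondary, more bookkeeping-intensive difficulty is tracking the exact constant $18^{r}$ (versus the $2^{r}$ of Proposition \ref{prop:twisted2nd}) through the prime-by-prime expansion, which reflects the additional absorption slots introduced by the swap structure of the fourth-moment main term.
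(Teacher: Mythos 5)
Your overall architecture — reduce $|\zeta|^{2}|\zeta'|^{2}$ twisted by a Dirichlet polynomial to a shifted, twisted fourth moment of $\zeta$, then bound the resulting Euler-product functional of the twist coefficients — matches what the paper does, but you are doing far more work than is needed and you leave the hardest bookkeeping step unverified. The crucial observation you miss is that the paper has \emph{already} established Lemma \ref{lem:twisted4}, which says
\[
\int_{\R}|\zeta(\tfrac12+it)|^{2}|\zeta'(\tfrac12+it)|^{2}|A(\tfrac12+it)|^{2}\phi(t/T)\,dt\ll T(\log T)^{6}\max_{|z_j|=3^j/\log T}|G(z_1,z_2,z_3,z_4)|,
\]
and that this $G$ is \emph{literally the same} multiplicative functional of the twist coefficients $a_h$ that appears in proposition 3 of \cite{HRS} (their paper uses the same $B_{z_1,z_2,z_3,z_4}$ structure coming from the twisted fourth moment main term). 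Consequently the two bounds
\[
\max_{|z_j|=3^j/\log T}|G|\ll(\log T)^{k^2-4},\qquad
\max_{|z_j|=3^j/\log T}|G|\ll(\log T_{v-1})^{k^2-4}\cdot 18^{r}r!\,P_v^{r}\exp(P_v)
\]
can be cited verbatim from \cite{HRS}, including the explicit constant $18^{r}$ whose origin you describe only heuristically as ``counting absorption configurations.'' That constant is exactly where your proposal has a genuine gap: you flag it yourself as a ``bookkeeping-intensive difficulty,'' and indeed re-deriving it from the pairing combinatorics of $|\p_v|^{2r}$ against the four zeta factors is nontrivial; the paper's proof avoids this entirely. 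Your Cauchy-integral introduction of the derivatives is a legitimate alternative to the paper's device of differentiating the shifted moment $I_T(\alpha,\beta)$ in $\alpha,\beta$ (as in \cite{YoungSL}), but it does not buy you anything here since Lemma \ref{lem:twisted4} already packages the outcome of that differentiation. In short: given Lemma \ref{lem:twisted4}, Proposition \ref{prop:twisted4th} is a two-line citation; your plan re-proves the lemma from scratch and then still leaves the $18^{r}$ constant unestablished.
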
 

We will derive estimates for general twisted joint moments of $\zeta$ in the following section, and then use these estimates to prove Propositions \ref{prop:twisted2nd} and \ref{prop:twisted4th} in the final section. Before we undertake this, let us see how these estimates imply Proposition \ref{prop:h=1 bound}.

\begin{proof}[Proof of Proposition \ref{prop:h=1 bound}]
Our estimates give 
\begin{align*}
&\qquad \qquad \quad \int_{T}^{2T} |\zeta(\tfrac{1}{2} + i t)|^{2k - 2} |\zeta'(\tfrac{1}{2} + i t)|^2 dt \ll \\
T(\log T&)^{k^2 + 2} + \sum_{2\leq v \leq \ell} T (\log T_{v-1} )^{k^2 + 2} \Bigg( \left(\frac{\log T}{\log T_{v-1} }\right)^3 \frac{2^{\lceil 50 P_v \rceil} \lceil 50 P_v \rceil ! P_v^{\lceil 50 P_v \rceil }\exp(P_v)}{(50 P_v)^{2\lceil 50 P_v \rceil}} \\
&+ \left(\frac{\log T}{\log T_{v-1} }\right)^6 \frac{18^{\lceil 50 P_v \rceil} \lceil 50 P_v \rceil ! P_v^{\lceil 50 P_v \rceil }\exp(P_v)}{(50 P_v)^{2\lceil 50 P_v \rceil}} \Bigg) \ll T (\log T)^{k^2 + 2},
\end{align*}
where the final bound follows by the same reasoning as \cite{HRS}. The conclusion for the $Z$ function is the same.

\end{proof}

\section{Twisted Moment Formulae}

We will derive the necessary twisted joint moment formulae from formulae for twisted moments of $\zeta(s)$ with small shifts off of the critical line. 
Fortunately there are many known formulae for computing twisted moments of $\zeta$ due to connections with the proportion of zeros of $\zeta$ lying on the critical line \cite{Conrey2/5, Levinson}.
Then following work of Young \cite{YoungSL}, we can differentiate these formulae with respect to the shifts to obtained the desired twisted joint moments. 
The formula in \cite{YoungSL} is valid for Dirichlet polynomials of length $T^{1/2 - \ve}$, and we note that work of Bettin Chandee and Radziwiłł \cite{BCRTwisted2} provides asymptotics for the twisted second moment without shifts for any Dirichlet polynomial of length at most $T^{17/33- \ve}$. The twisted fourth moment formula we use was first proven by Hughes and Young \cite{HYTwisted4} for Dirichlet polynomials of length at most $T^{1/11 - \ve}$, which was later increased to $T^{1/4 - \ve}$ by Bettin Bui Li and Radziwiłł \cite{BBLRTwisted4}.

Following these works, we will bound the desired twisted moments by introducing a smooth cutoff. Going forward, we fix a smooth $\phi: \R \rightarrow \R$ such that $\supp \phi \subset [3/4,9/4]$ and $\phi(t) = 1$ for all $t\in [0,1]$.  

\begin{lemma}\label{lem:twisted2}
Given a Dirichlet polynomial $A(s) = \sum_{h\leq T^\theta} \tfrac{a_h} {h^{s}}$ with $\theta < 1/2$, if 
\[
F(z_1,z_2) = \sum_{h,k \leq T^\theta} \frac{a_h \overline{a_k}}{[h,k]} \frac{(h,k)^{z_1 + z_2}}{h^{z_1} k^{z_2}},
\]
then
\[
\widetilde{I}^{(1)} (T) := \int_\R |\zeta'(\tfrac{1}{2} + i t)|^2 |A(\tfrac{1}{2} + i t)|^2 \phi(t/T) dt \ll T (\log T)^3 \max_{|z_j| = 3^j / \log T} |F(z_1,z_2)|, 
\]
and the same bound holds when $\zeta(\tfrac{1}{2} + i t)$ is replaced by $Z(t)$.
\end{lemma}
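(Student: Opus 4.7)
The plan is to express $|\zeta'(\tfrac{1}{2}+it)|^2$ as a double contour integral of a shifted second moment of $\zeta$, and then invoke a known twisted shifted second moment asymptotic. Since $\overline{\zeta(s)} = \zeta(\bar s)$, we have
\[
|\zeta'(\tfrac{1}{2}+it)|^2 = \zeta'(\tfrac{1}{2}+it)\,\zeta'(\tfrac{1}{2}-it) = \partial_{z_1}\partial_{z_2}\bigl[\zeta(\tfrac{1}{2}+z_1+it)\,\zeta(\tfrac{1}{2}+z_2-it)\bigr]_{z_1 = z_2 = 0},
\]
so by Cauchy's integral formula on circles $|z_j| = r_j$ about the origin,
\[
|\zeta'(\tfrac{1}{2}+it)|^2 = \frac{1}{(2\pi i)^2} \oint_{|z_1| = r_1} \oint_{|z_2| = r_2} \frac{\zeta(\tfrac{1}{2}+z_1+it)\,\zeta(\tfrac{1}{2}+z_2-it)}{z_1^2\, z_2^2}\, dz_2\, dz_1.
\]

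Multiplying by $|A(\tfrac{1}{2}+it)|^2 \phi(t/T)$ and swapping the $t$-integration inside the contours reduces the lemma to estimating the twisted shifted second moment
\[
M(z_1, z_2) := \int_{\R} \zeta(\tfrac{1}{2}+z_1+it)\, \zeta(\tfrac{1}{2}+z_2-it)\, |A(\tfrac{1}{2}+it)|^2\, \phi(t/T)\, dt.
\]
Since $A$ has length $T^\theta$ with $\theta < 1/2$, Young's asymptotic \cite{YoungSL} (or the Bettin--Chandee--Radziwi\l\l\ formula \cite{BCRTwisted2}) gives, uniformly for $|z_j| \ll 1/\log T$,
\[
M(z_1, z_2) = T\widehat{\phi}(0)\Bigl[F(z_1, z_2)\,\zeta(1 + z_1 + z_2) + \bigl(\tfrac{T}{2\pi}\bigr)^{-z_1 - z_2} F(-z_2, -z_1)\,\zeta(1 - z_1 - z_2)\Bigr] + O(T^{1 - \delta})
\]
for some $\delta > 0$, with $F$ as in the lemma statement; the second bracketed term is the swap contribution produced by the functional equation.

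The nested choice $r_j = 3^j/\log T$ now plays the key role: on these contours $|z_1 + z_2| \geq r_2 - r_1 = 6/\log T$, so $\zeta(1 \pm (z_1 + z_2)) \ll \log T$, while $|(T/2\pi)^{-z_1 - z_2}|$ remains bounded because $|z_1 + z_2| \ll 1/\log T$. Trivial estimation of the double contour integral then yields
\[
\widetilde{I}^{(1)}(T) \ll T \cdot r_1 r_2 \cdot \log T \cdot \frac{1}{r_1^2 r_2^2} \max_{|z_j| = r_j} |F(z_1, z_2)| \ll T(\log T)^3 \max_{|z_j| = r_j} |F(z_1, z_2)|,
\]
since $1/(r_1 r_2) = (\log T)^2/27$; the $O(T^{1 - \delta})$ error is absorbed after multiplication by $1/(r_1^2 r_2^2) \asymp (\log T)^4$.

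For the Hardy $Z$ function, write $Z(t) = \omega(t)\,\zeta(\tfrac{1}{2} + it)$ where $\omega$ is the unimodular functional equation factor, so $|\omega'(t)| \ll \log T$ and consequently $|Z'(t)|^2 \ll (\log T)^2 |\zeta(\tfrac{1}{2}+it)|^2 + |\zeta'(\tfrac{1}{2}+it)|^2$. The first term contributes $\ll T(\log T)^3 \max |F|$ by the standard twisted second moment asymptotic (no shifts needed), and the second is precisely what was just bounded. The main obstacle is not the contour manipulation itself but rather verifying that the error term in the twisted shifted second moment asymptotic remains uniformly small as $z_1, z_2$ range over the contours; it is this uniformity that dictates the restriction $\theta < 1/2$ and forces the shifts to be of size $1/\log T$ rather than a fixed small constant.
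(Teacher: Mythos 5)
Your argument is correct and delivers the stated bound, but it is arranged differently from the paper. The paper starts from Young's shifted twisted second moment, rewrites its main term as a double contour integral via Lemma 2.5.1 of \cite{CFKRS}, and only then differentiates in the shifts $\a,\b$ (justified by holomorphy plus the uniform error term) before setting $\a=\b=0$ and estimating trivially on the circles $|z_j|=3^j/\log T$. You instead apply Cauchy's integral formula to $(z_1,z_2)\mapsto\zeta(\tfrac12+z_1+it)\zeta(\tfrac12+z_2-it)$ pointwise in $t$, so that the reduction to the shifted moment $M(z_1,z_2)$ is an exact identity, and the asymptotic formula is only inserted afterwards on the contour; this avoids the CFKRS lemma and the step of differentiating an asymptotic formula, at the cost of nothing, since both routes rest on the same inputs (uniformity of Young's error for complex shifts $\ll 1/\log T$ with $\theta<1/2$, and the nested radii keeping $|z_1+z_2|\gg 1/\log T$ so that $\zeta(1\pm(z_1+z_2))\ll\log T$). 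Two small points you should make explicit. First, the swapped main term involves $F(-z_2,-z_1)$, whose arguments lie on circles of radii $9/\log T$ and $3/\log T$ in the opposite order to the maximum in the statement; this is harmless because $F(z_1,z_2)=\overline{F(\bar z_2,\bar z_1)}$ (swap $h\leftrightarrow k$ in the sum), so the two maxima coincide, but as written the bound does not literally match the lemma. Second, your treatment of the $Z$ case differs from the paper's (which simply redoes the contour computation with the phase factor removed): the pointwise bound $|Z'(t)|^2\ll(\log T)^2|\zeta(\tfrac12+it)|^2+|\zeta'(\tfrac12+it)|^2$ is fine, but the first term then requires the unshifted twisted second moment bound $\int_\R|\zeta(\tfrac12+it)|^2|A(\tfrac12+it)|^2\phi(t/T)\,dt\ll T\log T\max_{|z_j|=3^j/\log T}|F(z_1,z_2)|$, with the maximum over the \emph{same} contours; this is exactly the corresponding lemma of \cite{HRS}, so you should cite it (or run the same contour argument once more) rather than appeal to a generic ``standard asymptotic''. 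With those two remarks added, the proof is complete.
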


\begin{proof}
Let $\a,\b \in \C$ have modulus less than $1/\log T$. 
Then by \cite{YoungSL} we may write
\begin{align*}
 I_T(\a,\b) := \int_T^{2T} \zeta(\tfrac{1}{2} + \a + i t)&\zeta(\tfrac{1}{2} + \b + i t) |A(\tfrac{1}{2} + i t)|^2 dt \\
= \sum_{h,k\leq T^\theta} \frac{a_h \overline{a_k}}{[h,k]} \int_\R \Bigg( \frac{(h,k)^{\a+ \b}}{h^\a k^\b} \zeta(1 + \a + \b)  + &\left(\frac{t}{2\pi}\right)^{-\a - \b} \frac{(h,k)^{-\a- \b}}{h^{-\a} k^{-\b}} \zeta(1 - \a - \b)  \Bigg) \phi(t/T) dt + O(T^{1-\d})
\end{align*}
for some $\d>0$, which is holomorphic in $\a,\b$ sufficiently small.
We may express the main term as a multiple contour integral around $\a$ and $\b$: by lemma 2.5.1 of \cite{CFKRS} and a shift of contours we find
\begin{align*}
I_T(\a,\b) =& -\frac{1}{(2\pi i)^2} \oint\limits_{|z_2 - \b| = 9/\log T}\oint\limits_{|z_1 - \a| = 3/\log T} F(z_1, -z_2) \frac{\zeta(1+z_1-z_2) (z_1-z_2)^2}{(z_1 - \a)(z_1 + \b)(z_2 - \a)(z_2 + \b)} \\
&\qquad\qquad \times \left(\int_\R \left(\frac{t}{2\pi}\right)^{\tfrac{z_1 - z_2 - \b - \a}{2}} \phi(t/T) dt \right) dz_1 dz_2 + O(T^{1-\d}) \\
&= -\frac{1}{(2\pi i)^2} \oint\limits_{|z_2| = 9/\log T} \oint\limits_{|z_1| = 3/\log T} F(z_1, -z_2) \frac{\zeta(1+z_1-z_2) (z_1-z_2)^2}{(z_1 - \a)(z_1 + \b)(z_2 - \a)(z_2 + \b)} \\
&\qquad \qquad \times \left(\int_\R \left(\frac{t}{2\pi}\right)^{\tfrac{z_1 - z_2 - \b - \a}{2}} \phi(t/T) dt \right) dz_1 dz_2 + O(T^{1-\d}). 
\end{align*}
Note we do not cross any poles when shifting contours since $|\a|,|\b| < 1/\log T$.
Now since $I_T(\a,\b)$ is holomorphic with respect to small $\a$ and $\b$, as in \cite{YoungSL} the derivatives of $I_T(\a, \b)$ with respect to $\a$ and $\b$ can be obtained via Cauchy's theorem as  contour integrals along circles of radii $\asymp 1/\log T$.
Since the error term holds uniformly on these contours, we conclude
\begin{align*}
\widetilde{I}_T(&\a,\b) := \int_T^{2T} \zeta'(\tfrac{1}{2} + \a + i t)\zeta'(\tfrac{1}{2} + \b + i t) |A(\tfrac{1}{2} + i t)|^2 dt
\\
= \frac{d}{d\a} \frac{d}{d\b} \Bigg[ -\frac{1}{(2\pi i)^2} &\oint\limits_{|z_2| = 9/\log T}\oint\limits_{|z_1| = 3/\log T} F(z_1, -z_2) \frac{\zeta(1+z_1-z_2) (z_1-z_2)^2}{(z_1-\a)(z_1 + \b)(z_2 - \a)(z_2 + \b)} \\
&\times \left(\int_\R \left(\frac{t}{2\pi}\right)^{\tfrac{z_1 - z_2 - \b - \a}{2}} \phi(t/T) dt \right) dz_1 dz_2 \Bigg] + O(T^{1-\d}). 
\end{align*}
To compute $\widetilde{I}^{(1)}(T)$, we evaluate these derivatives and then set $\a = \b = 0$, obtaining
\begin{align*}
\widetilde{I}^{(1)}(T) =& \frac{1}{(2\pi i)^2} \oint\limits_{|z_2| = 9/\log T} \oint\limits_{|z_1| = 3/\log T}  F(z_1, -z_2) \zeta(1+z_1-z_2) (z_1-z_2)^2  \int_\R \Bigg[\left(z_1 + z_2 + \frac{z_1 z_2}{2} \log\frac{t}{2\pi} \right)\\
&   \times  \left(z_1 + z_2 - \frac{z_1 z_2}{2} \log\frac{t}{2\pi}\right)  \left(\frac{t}{2\pi}\right)^{\tfrac{z_1 - z_2}{2}}  \phi(t/T) dt \Bigg] \frac{dz_1}{z_1^4}\frac{dz_2}{z_2^4} + O(T^{1-\d}). 
\end{align*}
Finally, since $|z_j| = 3^j/\log T$ and $\supp \phi \subset[3/4,9/4]$, notice that
\[
    \zeta(1+z_1 - z_2) \ll \log T,\quad (z_1-z_2)^2 \ll (\log T)^{-2},
\]
and
\[
 \int_\R \left(z_1 + z_2 + \frac{z_1 z_2}{2} \log\frac{t}{2\pi} \right)  \left(z_1 + z_2 - \frac{z_1 z_2}{2} \log\frac{t}{2\pi}\right)  \left(\frac{t}{2\pi}\right)^{\tfrac{z_1 - z_2}{2}}  \phi(t/T) dt  \ll T(\log T)^{-2},
\]
so the claim now follows. 

The case for twisted moments of $Z$ is similar. 
The main difference is that applying lemma 2.5.1 of \cite{CFKRS} gives up to a power savings the simpler formula
\begin{align*}
 -\frac{1}{(2\pi i)^2} &\oint\limits_{|z_2| = 9/\log T}\oint\limits_{|z_1| = 3/\log T} F(z_1, -z_2) \frac{\zeta(1+z_1-z_2) (z_1-z_2)^2}{(z_1 - \a)(z_1 + \b)(z_2 - \a)(z_2 + \b)} \\
&\qquad\qquad \times \left(\int_\R \left(\frac{t}{2\pi}\right)^{\tfrac{z_1 - z_2}{2}} \phi(t/T) dt \right) dz_1 dz_2.
\end{align*}
Then differentiating with respect to $\a$ and $\b$ and setting the shifts to zero we obtain 
\begin{align*}
\frac{1}{(2\pi i)^2} &\oint\limits_{|z_2| = 9/\log T} \oint\limits_{|z_1| = 3/\log T}  F(z_1, -z_2) \zeta(1+z_1-z_2) (z_1^2-z_2^2)^2  \left(\int_\R \left(\frac{t}{2\pi}\right)^{\tfrac{z_1 - z_2}{2}}  \phi(t/T) dt \right) \frac{dz_1}{z_1^4}\frac{dz_2}{z_2^4},
\end{align*} 
which satisfies the same bound.
\end{proof}

\begin{lemma}\label{lem:twisted4}
Given a Dirichlet polynomial $A(s) = \sum_{h\leq T^\theta} \tfrac{a_h} {h^{s}}$ with $\theta < 1/4$, if 
\[
G(z_1,z_2,z_3,z_4) = \sum_{h,k \leq T^\theta} \frac{a_h \overline{a_k}}{[h,k]} B_{z_1,z_2,z_3,z_4}\left(\frac{h}{(h,k)}\right)B_{z_3,z_4,z_1,z_2}\left(\frac{k}{(h,k)}\right),
\]
where
\[
B_{z_1,z_2,z_3,z_4}(n) = \prod_{p^m \|  n} \left(\sum_{j\geq 0} \frac{\sigma_{z_1,z_2}(p^{j + m}) \sigma_{z_3,z_4} (p^j)}{p^j} \right)
\left(\sum_{j\geq 0} \frac{\sigma_{z_1,z_2}(p^{j}) \sigma_{z_3,z_4} (p^j)}{p^j} \right)^{-1}
\]
and $\sigma_{z_1,z_2}(n) = \sum_{ab = n} a^{-z_1} b^{-z_2}$, then
\[
\widetilde{I}^{(2)}(T) := \int_\R |\zeta(\tfrac{1}{2} + i t)|^2 |\zeta'(\tfrac{1}{2} + i t)|^2 |A(\tfrac{1}{2} + i t)|^2 \phi(t/T) dt \ll T (\log T)^6 \max_{|z_j| = 3^j / \log T} |G(z_1,z_2,z_3,z_4)|.
\]
The same bound holds when $\zeta(\tfrac{1}{2} + i t)$ is replaced by $Z(t)$.
\end{lemma}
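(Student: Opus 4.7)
The plan is to closely mimic the proof of Lemma \ref{lem:twisted2}, now starting from the twisted fourth moment of $\zeta$ with four shifts established in \cite{BBLRTwisted4}. For shifts $\alpha_1,\alpha_2,\alpha_3,\alpha_4 \in \C$ of modulus at most $1/\log T$ and $\theta < 1/4$, that reference gives an asymptotic with power-saving error term for
\[
I_T(\vec\alpha) := \int_\R \zeta(\tfrac{1}{2} + \alpha_1 + it)\zeta(\tfrac{1}{2} + \alpha_2 + it)\zeta(\tfrac{1}{2} + \alpha_3 - it)\zeta(\tfrac{1}{2} + \alpha_4 - it)|A(\tfrac{1}{2}+it)|^2\phi(t/T)\,dt,
\]
whose main term is a sum of six CFKRS-style ``swap'' terms over the permutations interchanging variables in $\{\alpha_1,\alpha_2\}$ with those in $\{-\alpha_3,-\alpha_4\}$. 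Applying lemma 2.5.1 of \cite{CFKRS}, the main term consolidates into a single quadruple contour integral around small circles enclosing the four shifts, and the arithmetic content of the integrand is precisely $G(z_1,z_2,-z_3,-z_4)$: the functions $B_{z_1,z_2,z_3,z_4}$ appearing in the statement are exactly the local Euler factor corrections one obtains after pulling out the four zeta factors $\zeta(1+z_i-z_j)$ with $i \in \{1,2\}, j \in \{3,4\}$. Since the shifts are smaller than the target contour radii, I would shift each contour onto the circle $|z_j| = 3^j/\log T$ about the origin without crossing any poles, incurring only an $O(T^{1-\delta})$ error throughout.

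The next step is to take the derivatives $\partial_{\alpha_2}\partial_{\alpha_4}$ of $I_T(\vec\alpha)$ and set all four shifts to zero. Since $\partial_{\alpha_2}\zeta(\tfrac{1}{2}+\alpha_2+it)|_{\alpha_2 = 0} = \zeta'(\tfrac{1}{2}+it)$ and $\partial_{\alpha_4}\zeta(\tfrac{1}{2}+\alpha_4-it)|_{\alpha_4=0} = -\zeta'(\tfrac{1}{2}-it)$, this yields $\widetilde I^{(2)}(T)$ up to an unimportant sign, using $|\zeta(\tfrac{1}{2}+it)|^2 = \zeta(\tfrac{1}{2}+it)\zeta(\tfrac{1}{2}-it)$ and likewise for $|\zeta'|^2$. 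Exactly as in the proof of Lemma \ref{lem:twisted2}, the uniformity of the error term in $\vec\alpha$ on the shifted contours lets these two derivatives be realised via Cauchy's theorem as contour integrals along circles of radius $\asymp 1/\log T$ surrounding $\alpha_2, \alpha_4$, and the remainder stays $O(T^{1-\delta})$. Setting $\vec\alpha = \vec 0$ then leaves a clean quadruple contour integral representation of $\widetilde I^{(2)}(T)$.

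Finally I would estimate this integral on the circles $|z_j| = 3^j/\log T$. The four factors $\zeta(1+z_i-z_j)$ each contribute $O(\log T)$, giving $(\log T)^4$ in total; the $t$-integral $\int_\R (t/2\pi)^{\text{small linear combination of } z_j}\phi(t/T)\,dt$ is trivially $O(T)$; and the remaining Vandermonde-like differences $(z_i - z_j)$ in the kernel combine with the Cauchy factors $z_j^{-n_j}$ arising from the two $\alpha$-differentiations to leave behind $(\log T)^2$ of extra growth, one for each derivative. Combining everything yields $\widetilde I^{(2)}(T) \ll T(\log T)^6 \max_{|z_j| = 3^j/\log T}|G|$ as claimed. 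The $Z(t)$ version goes through identically, with the usual simplification that the gamma factors in the functional equation collapse the exponent of $(t/2\pi)$ to a single expression linear in the $z_j$, exactly as in the proof of Lemma \ref{lem:twisted2}.

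The main obstacle is the combinatorial bookkeeping. I need to verify that the six swap terms in the main term of \cite{BBLRTwisted4} really do package into a single quadruple contour integral whose integrand is holomorphic in the shifts on the relevant polydisc, and I need to confirm that the Vandermonde-like differences supplied by lemma 2.5.1 of \cite{CFKRS} contribute exactly the right negative powers of $\log T$ to cancel against the Cauchy residue factors $z_j^{-n_j}$, leaving only $(\log T)^2$ of extra growth rather than $(\log T)^4$. This is a direct but slightly involved calculation with the explicit CFKRS recipe for the fourth moment, paralleling the short computation done inside the proof of Lemma \ref{lem:twisted2}, and it is the step where genuine care is needed.
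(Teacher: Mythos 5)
Your proposal is correct and takes essentially the same route as the paper: start from the shifted twisted fourth moment of \cite{BBLRTwisted4}, package the main term as a quadruple contour integral via lemma 2.5.1 of \cite{CFKRS}, shift contours onto the circles $|z_j| = 3^j/\log T$, differentiate twice in the shifts (Cauchy-integral realisation, uniform error), set shifts to zero, and read off $T(\log T)^6\max|G|$. The only cosmetic difference is that you write the starting formula with four generic shifts and then specialise two of them to zero, whereas the paper introduces only two shifts $\alpha,\beta$ from the outset, placing them on the two factors $\zeta(\tfrac12+\alpha+it)$ and $\zeta(\tfrac12+\beta-it)$; these are the same thing.
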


\begin{proof}
This is similar to the proof of Lemma \ref{lem:twisted2}. Using the twisted 4th moment formula with shifts in \cite{BBLRTwisted4} and lemma 2.5.1 of \cite{CFKRS}, we can write up to a power savings in $T$
\begin{align*}
I_T&(\a,\b) = \int_{\R} \zeta(\tfrac{1}{2} + \a + i t)\zeta(\tfrac{1}{2} + i t)\zeta(\tfrac{1}{2} +\b - i t)\zeta(\tfrac{1}{2} - i t) |A(\tfrac{1}{2} + i t)|^2 \phi(t/T) dt \\
&= \frac{1}{4(2\pi i)^4} \oint\limits_{|z_j| = 3^j/\log T}  A(z_1,z_2,-z_3,-z_4) G(z_1,z_2,-z_3,-z_4) \Delta(z_1,z_2,z_3,z_4)^2 \\
&\quad \times \left(\int_\R \left(\frac{t}{2\pi}\right)^{\tfrac{z_1 + z_2 -z_3 - z_4  - \a - \b}{2}} \phi(t/T) dt \right) \prod_{m=1}^4\frac{dz_m}{z_m^2 (z_m -\a)(z_m + \b)},
\end{align*}
where $\Delta(z_1,z_2,z_3,z_4) = \prod_{1\leq j <  k \leq 4} (z_k-z_j)$ is the Vandermonde determinant and
\[
A(z_1,z_2,z_3,z_4) = \frac{\zeta(1+z_1 + z_3)\zeta(1+z_1 + z_4)\zeta(1+z_2 + z_3)\zeta(1+z_2 + z_4)}{\zeta(2+z_1 + z_2+ z_3 + z_4)}.
\]
Now differentiating with respect to $\a$ and $\b$ and then setting $\a = \b = 0$ gives after some algebraic manipulation
\begin{align*}
\widetilde{I}^{(2)}&(T) 
= \frac{1}{4(2\pi i)^4} \oint\limits_{|z_j| = 3^j/\log T}   A(z_1,z_2,-z_3,-z_4) G(z_1,z_2,-z_3,-z_4) \Delta(z_1,z_2,z_3,z_4)^2 \\
&\quad \times \Bigg[\int_\R  \left(z_1^2 z_2^2 z_3^2 z_4^2 \left(\log \frac{t}{2\pi}\right)^2 - (z_1z_2z_3 + z_1z_2z_4 + z_1z_3z_4 + z_2z_3z_4)^2 \right)
\\
&\qquad\qquad \times  \left(\frac{t}{2\pi}\right)^{\tfrac{z_1 + z_2 -z_3 - z_4}{2}} \phi(t/T) dt \Bigg] \prod_{m=1}^4\frac{dz_m}{z_m^6}.
\end{align*}
Now to deduce the claim, notice that
\[
A(z_1,z_2,-z_3,-z_4) \ll (\log T)^4,  \quad \Delta(z_1,z_2,z_3,z_4)^2 \ll (\log T)^{-12},
\]
\begin{align*}
\int_\R  \left(z_1^2 z_2^2 z_3^2 z_4^2 \left(\log \frac{t}{2\pi}\right)^2 - (z_1z_2z_3 + z_1z_2z_4 + z_1z_3z_4 + z_2z_3z_4)^2 \right) \\
\times \left(\frac{t}{2\pi}\right)^{\tfrac{z_1 + z_2 -z_3 - z_4}{2}} \phi(t/T) dt \ll T (\log T)^{-6}
\end{align*}
for $|z_j|= 3^j/\log T$ and $t\in [3T/4,9T/4]$.
As in the previous proof, the analysis for the $Z$ function is simpler, and the same bound holds.
\end{proof}

\section{Proof of Propositions \ref{prop:twisted2nd} and \ref{prop:twisted4th}}

The proofs of Propositions \ref{prop:twisted2nd} and \ref{prop:twisted4th} are straightforward modifications of the proof of proposition 3 in \cite{HRS}. In fact we will see Proposition \ref{prop:twisted4th} is an immediate consequence of Lemma \ref{lem:twisted4} and a bound for $G(z_1,z_2,z_3,z_4)$ proven in \cite{HRS}. This will then conclude the proof of Theorem \ref{thm:jointMomentBound}.

\begin{proof}[Proof of Proposition \ref{prop:twisted2nd}]

We will apply Lemma \ref{lem:twisted2} to the Dirichlet polynomials 
\[
\prod_{2\leq j \leq \ell} \n_{j}(s;k-1)
\]
and 
\[
\left( \prod_{2\leq j < v} \n_{j}(\tfrac{1}{2} + i t;k-1)\right) \p_v(\tfrac{1}{2} + i t)^{r}.
\]
By multiplicativity, it suffices to bound the sums
\begin{equation}\label{eqn:twisted2Nsum}
\sum_{\substack{p \mid m,n \Rightarrow T_{j-1}\leq p < T_j \\ \Omega(n),\Omega(n) \leq 500 P_j }} 
\frac{(k-1)^{\Omega(n) + \Omega(m)} g(n) g(m)}{[n,m]} \cdot \frac{(m,n)^{z_1 + z_2}}{m^{z_1} n^{z_2}}
\end{equation}
and
\begin{equation}\label{eqn:twisted2Psum}
\sum_{\substack{p \mid mn \Rightarrow T_{j-1}\leq p < T_j \\ \Omega(n) = \Omega(n) = r }}
\frac{r!^2 g(n) g(m)}{[n,m]} \cdot \frac{(m,n)^{z_1 + z_2}}{m^{z_1} n^{z_2}}
\end{equation}
In both cases, we will estimate
\[
\frac{(m,n)^{z_1 + z_2}}{m^{z_1} n^{z_2}} \ll 1,
\]
which holds under the assumptions $|z_j| \leq 9/\log T$ and $m,n \leq T^{1/10}$.

First we handle (\ref{eqn:twisted2Nsum}). Using Rankin's trick, we may drop the condition $\Omega(n),\Omega(n) \leq 500 P_j$ incurring an error term of at most
\begin{align*}
e^{-500 P_j} 
\sum_{p \mid m,n \Rightarrow T_{j-1}\leq p < T_j } \frac {((k-1) e)^{\Omega(n)+\Omega(m)}}{[n,m]} &\ll  e^{-500 P_j} \prod_{T_{j-1} \leq p < T_j} \left(1 + \frac{e + e + e^2}{p} + O\left(\frac{1}{p^2}\right)\right) \\
&\ll e^{-100 P_j}.
\end{align*}
Now write
\begin{align*}
\sum_{p \mid m,n \Rightarrow T_{j-1}\leq p < T_j} 
\frac{(k-1)^{\Omega(n) + \Omega(m)} g(n) g(m)}{[n,m]} &= \prod_{T_{j-1}\leq p < T_j}\left(1 + \frac{2(k-1) +(k-1)^2}{p}+ O\left(\frac{1}{p^2}\right) \right)\\
&= \prod_{T_{j-1}\leq p < T_j}\left(1 + \frac{k^2 - 1}{p}+ O\left(\frac{1}{p^2}\right) \right)
\end{align*}
Therefore by Lemma \ref{lem:twisted2} and Merten's third estimate we conclude the integral in (\ref{eqn:twisted2Nint}) is
\begin{align*}
    \ll T(\log T)^3 \prod_{1\leq j \leq \ell} \left(\prod_{T_{j-1}\leq p < T_j}\left(1+\frac{k^2 - 1}{p}\right) + O\left(\frac{1}{p^2} + e^{-100 P_j}\right)\right) \ll T(\log T)^{k^2+2}.
\end{align*}

Now we handle the sums (\ref{eqn:twisted2Psum}). Write
\begin{align*}
\sum_{\substack{p \mid mn \Rightarrow T_{j-1}\leq p < T_j \\ \Omega(n) = \Omega(n) = r }}
\frac{r!^2 g(n) g(m)}{[n,m]}  \leq r!^2  \sum_{j=0}^r \sum_{\substack{p \mid d \Rightarrow T_{j-1}\leq p < T_j \\ \Omega(d) = j }} \frac{1}{d}\Bigg(\sum_{\substack{p \mid n \Rightarrow T_{j-1}\leq p < T_j \\ \Omega(n) =r - j }}  \frac{g(nd)}{n}   \Bigg)^2.
\end{align*}
Now using that $g(nd) \leq g(n)$,  we may further bound this by
\[
r!^2  \sum_{j=0}^r \left(\frac{1}{j!} P_v^j\right)\left(
\frac{1}{(r-j)!} P_v^{r-j}\right) = r!P_v^r \sum_{j=0}^r \binom{r}{j} \frac{P_v^{r-j}}{(r-j)!} \leq 2^r r! P_v^r \exp(P_v).
\]
The claim now readily follows by Lemma \ref{lem:twisted2}.
\end{proof}

\begin{proof}[Proof of Proposition \ref{prop:twisted2nd}]
This is a direct consequence of Lemma \ref{lem:twisted4} and proposition 3 of \cite{HRS}, where it is shown in the first case that
\[
\max_{|z_j| = 3^j/\log T} |G(z_1,z_2,z_3,z_4)| \ll T (\log T)^{k^2 - 4},
\]
and in the second case that
\[
\max_{|z_j| = 3^j/\log T} |G(z_1,z_2,z_3,z_4)| \ll (\log T_{v-1})^{k^2 - 4} \left(18^r r! P_v^r \exp(P_v) \right).
\]
\end{proof}


\begin{thebibliography}{9}
\bibliographystyle{abbrv}
\bibitem{AKW}
T. Assiotis, J. P. Keating, J. Warren, \textit{On the joint moments of the characteristic polynomials of random unitary matrices}. \href{https://arxiv.org/abs/2005.13961}{arXiv:2005.13961}


\bibitem{IntJM}
E. C. Bailey, S. Bettin, G. Blower, J. B. Conrey, A. Prokhorov, M. O. Rubinstein, N. C. Snaith, \textit{Mixed moments of characteristic polynomials of random unitary matrices}. J. Math. Phys. \textbf{60}, no. 8, (2019)

\bibitem{BCHBTwisted2nd}
R. Balasubramanian, J.B. Conrey, D. R. Heath-Brown, \textit{Asymptotic mean square of the product of the Riemann zeta-function and a Dirichlet polynomial}. J. Reine Angew. Math. \textbf{357}, 161–181 (1985).

\bibitem{PainleveFiniteN} 
E. Basor, P. Bleher, R. Buckingham, T. Grava, A. Its, E. Its, J. P. Keating, \textit{A representation of joint moments of CUE characteristic polynomials in terms of Painlev\'{e} functions}, Nonlinearity 32 4033, (2019).



\bibitem{BBLRTwisted4}
S. Bettin, H. M. Bui, X. Li, M. Radziwiłł,\textit{ A quadratic divisor problem and moments of the Riemann zeta-function}. \href{https://arxiv.org/abs/1609.02539}{arXiv.1609.02539.}

\bibitem{BCRTwisted2}
S. Bettin, V. Chandee, M. Radziwiłł, \textit{The mean square of the product of $\zeta(s)$ with Dirichlet polynomials}. J. Reine Angew. Math, \textbf{729}, 51–79 (2017).
 
\bibitem{Conrey2/5}
J. B. Conrey,\textit{ More than two fifths of the zeros of the Riemann zeta function are on the critical line}. J.
Reine Angew. Math. \textbf{339} , 1–26 (1989).


\bibitem{Conrey4thMoments}
J. B. Conrey, \textit{The fourth moment of derivatives of the Riemann zeta-function}. Q. J. Math. \textbf{39}, no. 1, 21-36 (1988).

\bibitem{CFKRSRMT}
Conrey J. B., Farmer D. W.,  Keating J. P., Rubinstein M. O., and  Snaith N. C., \textit{Autocorrelation of random matrix polynomials}. Commun. Math. Phys \textbf{237} 3, 365-395 (2003).

\bibitem{CFKRS}
J. B. Conrey,  D. W. Farmer,  J. P. Keating, M. O. Rubinstein, and  N. C. Snaith,\textit{ Integral moments of $L$-functions}. Proc. London Math. Soc. \textbf{91} no. 3, 33–104 (2005).

\bibitem{CGJM}
J. B. Conrey, A. Ghosh, \textit{On the mean values of the zeta-function II}, Acta Arith. \textbf{52}, no.4, 367-371, (1989)

\bibitem{CRS}
J. B. Conrey, M. O. Rubinstein, N. C. Snaith, \textit{Moments of the derivative of characteristic polynomials with an application to the Riemann Zeta function}. Comm. Math. Phys. \textbf{267}, 611-629, (2006).

\bibitem{Dehaye1}
P-O. Dehaye, Joint moments of derivatives of characteristic polynomials, Algebra Number Theory, \textbf{2}, no. 1, 31-68, (2008).


\bibitem{Dehaye2}
P-O. Dehaye, \textit{A note on moments of derivatives of characteristic polynomials}, 22nd International Conference on Formal Power Series and Algebraic Combinatorics, 681-692, Discrete Math. Theor. Comput. Sci. Proc. AN, (2010).

\bibitem{FW} 
P. J. Forrester, N. S. Witte, \textit{Application of the $\tau$-function theory of Painlevé equations to random matrices: P$_\text{VI}$, the JUE, CyUE, cJUE and scaled limits}. Nagoya Math. J. \textbf{174}, 29-114, (2004).


\bibitem{InghamMV}
A. E. Ingham, \textit{Mean-value theorems in the theory of the Riemann zeta function}. Proc. London Math. Soc. \textbf{27}, 273–300 (1926).


\bibitem{Harper}
A. J. Harper, \textit{Sharp conditional bounds for moments of the Riemann zeta function}.  \href{https://arxiv.org/abs/1305.4618}{arXiv.1305.4618.}

\bibitem{HRS}
W. Heap., M. Radziwiłł, K. Soundararajan, \textit{Sharp upper bounds for fractional moments of the Riemann zeta function}. Q. J. Math. \textbf{70}, no. 4, 1387–1396 (2019).

\bibitem{Hall}
R. R. Hall,\textit{ A Wirtinger Type Inequality and the Spacing of the Zeros of the Riemann Zeta-Function}. J. Number Theory, \textbf{93}, no. 2, 235-245, (2002).

\bibitem{HughesThesis}
C. P. Hughes, \textit{On the characteristic polynomial of a random unitary matrix and the Riemann zeta function}. PhD Thesis, University of Bristol, (2001).

\bibitem{HYTwisted4}
C. P. Hughes, M. P. Young, \textit{The twisted fourth moment of the Riemann zeta function}. J. Reine
Angew. Math. \textbf{641}, 203–236 (2010).

\bibitem{KS}
J. P. Keating, N. C. Snaith, \textit{Random matrix theory and $\zeta(\tfrac{1}{2} + it)$}. Comm. Math. Phys. \textbf{214}, 57–89 (2000).

\bibitem{KSLFns}
J.P. Keating, N.C. Snaith, \textit{Random matrix theory and $L$-Functions at $s = \tfrac{1}{2}$} , Comm. Math. Phys. \textbf{214}, no. 1, 91-100, (2000).

\bibitem{Levinson}
N. Levinson, \textit{More than one third of the zeros of Riemann’s zeta function are on $\sigma = \tfrac{1}{2}$}. Adv. Math. \textbf{13} (1974), 383–436.

\bibitem{LDSCLT}
M. Radziwilłł, \textit{Large deviations in Selberg’s central limit theorem}. \href{https://arxiv.org/abs/1108.5092}{arXiv.1108.5092.}

\bibitem{RS-EC}
M. Radziwiłł, K. Soundararajan, \textit{Moments and distribution of central $L$-values of quadratic
twists of elliptic curves}. Invent. Math. \textbf{202} no. 3, 1029–1068 (2015).


\bibitem{SRHMoments}
K. Soundararajan, \textit{Moments of the Riemann zeta function}. Annals of Math. \textbf{170} no. 2, 981–993 (2009).

\bibitem{YoungSL}
M. P. Young, \textit{A short proof of Levinson’s theorem}. Arch. Math. \textbf{95}, 539–548 (2010).

\end{thebibliography}
\end{document}